\theoremstyle{plain}
\newtheorem{theorem}{Theorem}[section]
\newtheorem{lemma}[theorem]{Lemma}
\newtheorem{observation}[theorem]{Observation}
\newtheorem{remark}[theorem]{Remark}
\theoremstyle{definition}
\newcommand{\eps}{\ensuremath{\varepsilon}}
\date{}
\title{Triangle-factors in pseudorandom graphs}
\author{	
	Rajko Nenadov
	\thanks{
		Department of Computer Science, ETH Zurich, Switzerland. Email: {\tt rnenadov@inf.ethz.ch.}
	}
}
\begin{document}
\maketitle

\begin{abstract}
	We show that if the second eigenvalue $\lambda$ of a $d$-regular graph $G$ on $n \in 3 \mathbb{Z}$ vertices is at most $\eps d^2/(n \log n)$, for a small constant $\eps > 0$, then $G$ contains a triangle-factor. The bound on $\lambda$ is at most an $O(\log n)$ factor away from the best possible one: Krivelevich, Sudakov and Szab\'o, extending a construction of Alon, showed that for every function $d = d(n)$ such that $\Omega(n^{2/3}) \le d \le n$ and infinitely many $n \in \mathbb{N}$ there exists a $d$-regular triangle-free graph $G$ with $\Theta(n)$ vertices and $\lambda = \Omega(d^2 / n)$. 
\end{abstract}

\section{Introduction}

Let $H$ be a graph on $h$ vertices. We say that a graph $G$ on $n \in h \mathbb{Z}$ vertices has an \emph{$H$-factor} if it contains a family of $n/h$  vertex disjoint copies of $H$. For example, in the case where $H = K_2$ is just an edge, a graph has an $H$-factor if and only if it contains a perfect matching. Thus $H$-factors are natural generalisations of perfect matchings from edges to arbitrary graphs. Usually we think of $H$ as being a small (fixed) graph while the number of vertices of $G$, denoted by $n$ throughout the paper, grows.

Determining sufficient conditions for the existence of $H$-factors is a fundamental line of research in Extremal Graph Theory. Textbook theorems of Hall and Tutte give sufficient conditions for the existence of a perfect matching. The first result which treats a more general case is by Corr\'adi and Hajnal \cite{corradi1963maximal} from the 1960s. It shows that any graph with $n \in 3 \mathbb{Z}$ vertices and minimum degree of at least $2n/3$ contains a triangle-factor. This was extended to arbitrary complete graphs by Hajnal and Szemer\'edi \cite{hajnal70}: $\delta(G) \ge (r-1)n/r$ suffices to guarantee a $K_r$-factor for any $r \ge 3$. This bound is easily seen to be the best possible. The question of characterising the smallest possible  minimum degree for an arbitrary graph $H$ attracted significant attention \cite{alon1996h,komlos2000tiling,komlos2001proof} until it was finally resolved by K\"uhn and Osthus \cite{kuhn2009minimum}. Various other versions of the problem that put further restrictions on a graph have been studied, with the common goal to reduce sufficient minimum  degree and therefore the minimum number of edges of graphs that satisfy it. These include multipartite \cite{fischer1999variants,keevash15,keevash2015multipartite,lo2013multipartite,martin2017asymptotic2} and Ramsey-T\'uran versions \cite{balogh2018triangle2,balogh2016triangle1}. However, all these results require $\delta(G) \ge f(H) n$ for some function $f$ depending on particular restrictions we put on a graph $G$. In particular, this does not tell us anything about graphs with $o(n^2)$ edges. What conditions for the existence of an $H$-factor could also be satisfied by sparse graphs?

This question was first addressed by Krivelevich, Sudakov and Szab\'o \cite{krivelevich2004triangle}. They proposed that, in the case of regular graphs, the role of the minimum degree condition in the Corr\'adi-Hajnal theorem can be replaced by a bound on the \emph{second eigenvalue}. Given a $d$-regular graph $G$, let $A = A(G)$ be its adjacency matrix and $\lambda_1 \ge \lambda_2 \ge \ldots \ge \lambda_n$ be its eigenvalues. Then $\lambda(G)$, the so-called second eigenvalue, is defined as $\lambda(G) = \max\{|\lambda_2|, |\lambda_n|\}$. The \emph{expander mixing lemma} (e.g. see \cite[Corollary 2.5]{alon2004probabilistic}) shows that $\lambda(G)$ governs the edge distribution of $G$. The smaller the $\lambda(G)$ is the more edge distribution of $G$ resembles that of $G(n, d/n)$, the Erd\H{o}s-Renyi random graph with edge probability $p = d/n$. It is convenient to quantify this in terms of \emph{$(\beta, p)$-bijumbledness}: a graph $G$ with $n$ vertices is $(\beta, p)$-bijumbled for some $p \in [0,1]$ and $\beta > 0$ if for every subsets $X, Y \subseteq V(G)$ we have
$$
	\left| e(X, Y) - |X||Y|p \right| \le \beta \sqrt{|X||Y|},
$$
where $e(X, Y)$ denotes the number of edges in $G$ with one endpoint in $X$ and the other in $Y$. Where $X$ and $Y$ are not disjoint, every edge which lies in their intersection is counted twice. 
The expander mixing lemma states that if $G$ is a $d$-regular graph with $n$ vertices then it is $(d/n, \lambda(G))$-bijumbled. A result of Erd\H{o}s and Spencer \cite{erdos1971imbalances} shows that if a graph $G$ is $(p, \beta)$-bijumbled then $\beta = \Omega(\sqrt{np})$,  thus we always have $\lambda(G) = \Omega(\sqrt{d})$. As one would expect, a random graph $G(n,p)$ is typically $(p, \Theta(\sqrt{np}))$-bijumbled. The question now becomes how close a graph needs to be to a random graph, or how close its edge distribution has to be to an  optimal one, in order to have a triangle-factor? Of course, we need that $d$ is not too small as otherwise even an optimal edge distribution would not be sufficient to guarantee a triangle-factor. We will quantify this statement shortly.

Krivelevich, Sudakov and Szab\'o \cite{krivelevich2004triangle} showed that if a $d$-regular graph $G$ with $n \in 3 \mathbb{Z}$ vertices  satisfies $\lambda(G) = o(d^3 / (n^2 \log n))$ then it contains a triangle-factor. As $\lambda(G) = \Omega(\sqrt{d})$, this only applies to $d$-regural graphs for $d = \omega(n^{4/5} \log^{2/5} n)$. An upper bound on $\lambda(G)$ was further relaxed to $\lambda(G) = O(d^{5/2}/n^{3/2})$ by Allen, B\"ottcher, H\`an, Kohayakawa and Person \cite{allen2017powers}. On the other hand, an ingenious construction by Alon \cite{alon1994explicit} (see also \cite{conlon2017sequence}) shows that there are infinitely many $n \in \mathbb{N}$ for which there exists a triangle-free $d$-regular graph $G$ with $n$ vertices, where $d = \Theta(n^{2/3})$ and $\lambda(G) = \Theta(\sqrt{d})$. Such graphs are close to optimal in terms of edge distribution, yet have surprisingly high density for triangle-free graphs. In comparison, any construction of such graphs based on random graphs achieves a density of at most $O(n^{-1/2})$. Krivelevich, Sudakov and Szab\'o \cite{krivelevich2004triangle} used the construction of Alon to show that for any function $d = d(n)$ such that $\Omega(n^{2/3}) \le d \le n$ and infinitely many $n \in \mathbb{N}$ there exists a triangle-free $d$-regular graph $G$ with $\Theta(n)$ vertices and $\lambda(G) = \Omega(d^2 / n)$. It is straightforward to show that if $\lambda(G) \le \eps d^2 / n$ then the graph $G$ contains a triangle (see Section \ref{sec:prelim}). Significantly improving previous bounds on $\lambda$, our main result shows that this is almost sufficient for the existence of a triangle-factor.

\begin{theorem} \label{thm:main}
	There exists $\eps > 0$ such that if $G$ is a $d$-regular graph on $n \in 3 \mathbb{N}$ vertices with the second eigenvalue $\lambda < \eps d^2 / (n \log n)$ then $G$ contains a triangle-factor.
\end{theorem}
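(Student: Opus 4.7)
The plan is the absorbing method. The aim is to build a small set $A\subseteq V(G)$ (the absorber) with the property that for any $L\subseteq V(G)\setminus A$ with $|L|$ a multiple of $3$ and $|L|$ sufficiently small, $G[A\cup L]$ contains a triangle-factor; one then triangle-packs $V(G)\setminus A$ almost perfectly, and absorbs the leftover.

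By the expander mixing lemma the hypothesis gives $G$ $(d/n,\lambda)$-bijumbled with $\lambda \le \eps d^2/(n\log n)$, and this governs all of the counting. For a triple $(x,y,z)\in V(G)^3$ call a disjoint $9$-set $\{a_1,a_2,a_3,b_1,b_2,b_3,c_1,c_2,c_3\}$ an \emph{absorbing gadget} for $(x,y,z)$ if both $\{xa_1a_2,\,yb_1b_2,\,zc_1c_2,\,a_3b_3c_3\}$ and $\{a_1a_2a_3,\,b_1b_2b_3,\,c_1c_2c_3\}$ are triangle-packings of $G$. A direct bijumbled counting argument shows that every triple admits $\Omega(d^{18}/n^9)$ such gadgets. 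Now sample a random $R\subseteq V(G)$ of size $n/(\log n)^{O(1)}$; a second-moment concentration for the number of gadgets of each triple inside $R$, combined with a union bound over all $n^3$ triples, yields an absorber $A\supseteq R$ in which every outside triple has many pairwise-disjoint gadgets. Greedily splitting any small leftover $L$ into triples and absorbing each one with a fresh gadget (while using the remaining unused gadgets in their non-absorbing configuration) then produces the promised triangle-factor of $A\cup L$.

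For the almost-cover I would analyse the $3$-uniform triangle hypergraph on $V(G)\setminus A$: bijumbledness makes it approximately $(d^3/n)$-regular with pairwise codegree $O(d^2/n)$, so Pippenger--Spencer gives a matching covering all but $o(n)$ vertices. A tailored iterated cleanup, at each step re-applying bijumbled counting to the current uncovered subset (using crucially that the $\lambda$-bound still allows accurate triangle counts after removing $(\log n)^{-O(1)}$-fractions of neighbourhoods), is needed to drive the leftover down to size at most $|A|/(\log n)^{O(1)}$, at which point the absorber takes over.

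The main obstacle is the sharp $\log n$ factor in the hypothesis. Counting gadgets forces us to count triangles within product sets of size $\Theta(d^2/n)$, and in this regime the bijumbledness error $\lambda\sqrt{|X||Y|}$ only beats the main term $|X||Y|(d/n)$ when $\lambda\ll d^3/n^2$; handling the genuine smallness of these sets, perhaps via higher-moment computations or by restricting to a ``typical'' core of vertices for which the counts truly concentrate, is the delicate step. Combined with the union bound over $n^3$ triples in constructing $A$, and with the need to push the nibble/cleanup leftover well below $n/(\log n)^{O(1)}$ rather than the more standard $o(n)$, balancing all these losses against a single $\log n$ factor (as opposed to a larger polylogarithm, as in the prior bound $\lambda=O(d^{5/2}/n^{3/2})$) is the technical heart of the proof.
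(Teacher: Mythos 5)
Your plan is the classical triple-gadget absorption, and the obstacle you flag at the end is not a technicality to be smoothed over --- it is precisely where the argument breaks, and the paper's whole point is to avoid it. To count your $9$-vertex gadgets for a triple $(x,y,z)$ you must count triangles $a_3b_3c_3$ with each vertex ranging over a joint neighbourhood $N(a_1)\cap N(a_2)$, etc., i.e.\ over sets of size $\Theta(np^2)=\Theta(d^2/n)$. Bijumbledness with parameter $\beta$ only detects edges (let alone triangles) between sets of size at least $\beta/p$, and triangles between sets of size at least $\Theta(\beta/p^2)$; with $\beta=\lambda<\eps d^2/(n\log n)$ this threshold is $\Theta(n/\log n)$, which is vastly larger than $d^2/n$ whenever $d=o(n)$. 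Equivalently, as you compute yourself, the gadget count $\Omega(d^{18}/n^9)$ needs $\lambda\ll d^3/n^2$, which is essentially the old Krivelevich--Sudakov--Szab\'o regime and is much stronger than the hypothesis of the theorem. Neither ``higher moments'' nor restricting to a typical core can rescue this: under the stated hypothesis the graph can be completely unconstrained inside sets of size $o(n/\log n)$, so per-vertex (or per-triple) absorbing structures of the kind you propose simply cannot be certified to exist. The same issue undermines your cleanup phase: once the uncovered set has size below $\Theta(\beta/p^2)=\Theta(n/\log n)$, bijumbledness gives no further information, so you cannot drive the leftover down to $|A|/(\log n)^{O(1)}$, and a Pippenger--Spencer nibble is neither available in this pseudorandomness-only setting (codegree conditions fail for small sets) nor needed.

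The paper's proof is built around exactly this constraint. Its absorbers are not gadgets attached to prescribed triples but long chains of $K_4^-$'s, each with $\ell+1$ ``removable'' vertices; absorption is organised in $\Theta(\log n)$ nested levels of chains, where each level absorbs an arbitrary leftover subfamily of the level below (Lemma~\ref{lemma:cascade}), and the leftover vertices outside all chains are matched into the bottom level via Haxell's theorem (Theorem~\ref{thm:hax}). The crucial design feature is that every application of the triangle-counting lemma (Lemma~\ref{lemma:triangle}) is to unions of removable-vertex sets of size $\Omega(n/\log n)\ge 2\beta/p^2$, never to neighbourhoods or common neighbourhoods of individual vertices; the only per-vertex statements used are degree statements into sets of size $\Theta(n/\log n)$, which Lemma~\ref{lemma:small_degree} does support. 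So your proposal, as written, has a genuine gap at its central counting step, and closing it requires replacing the triple-gadget absorber by a structure (such as the chain hierarchy) whose functioning only ever queries the pseudorandomness on sets of size $\Omega(\beta/p^2)$.
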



On the one hand, Theorem \ref{thm:main} states that if $d = \Theta((n \log n)^{2/3})$ then every optimal (with respect to edge distribution) $d$-regular graph with $n$ vertices contains a triangle-factor. On the other hand, for $d = O(n^{2/3})$ there exists at least one optimal $d$-regular graph with $n$ vertices which does not contain even a single triangle. There is an analogy between this result and, say, a similar result for random graphs: for $p = cn^{-1/2}$ we have that $G(n,p)$ is `almost' triangle-free (that is, all triangles can be destroyed by removing a negligible number of edges), while for $p = C n^{-1/2}$ not only can one not make it triangle-free, but even after removal of up to a third of the edges touching each vertex it contains an `almost' triangle-factor (see \cite{balogh2012corradi}). 

As we only rely on the edge distribution of a graph $G$, instead of proving Theorem \ref{thm:main} directly, we prove the following theorem for a broader class of bijumbled graphs. By the expander mixing lemma this implies Theorem \ref{thm:main}.

\begin{theorem} \label{thm:bijumbled}
	There exists $\eps > 0$ such that if $G$ is a $(p, \eps np^2/\log n)$-bijumbled graph on $n \in 3 \mathbb{N}$ vertices and minimum degree at least $np/2$, then $G$ contains a triangle-factor.
\end{theorem}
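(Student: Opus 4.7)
The proof would follow the absorbing method. The goal is to construct a small absorbing set $A\subseteq V(G)$ of size $\Theta(n/\log n)$ with the property that for every $W\subseteq V(G)\setminus A$ with $|W|\leq n/(\log n)^{C}$ and $3\mid(|A|+|W|)$, the induced graph $G[A\cup W]$ contains a triangle-factor. Given such an $A$, an almost-triangle-factor of $G[V(G)\setminus A]$ leaving exactly such a small residual set $W$ uncovered, together with the triangle-factor of $G[A\cup W]$, yields the desired triangle-factor of $G$.

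\textbf{Almost-factor.} The natural instinct of applying a hypergraph matching theorem (Pippenger--Spencer or the R\"odl nibble) to the $3$-uniform hypergraph of triangles of $G$ faces a basic obstruction: the bijumbledness condition controls an individual pair-codegree $|N(u)\cap N(v)|$ only up to additive error $\beta\sqrt{np}$, which in the extremal regime of Theorem~\ref{thm:bijumbled} already exceeds the expected codegree $np^2$. Thus I would avoid per-pair codegree control and instead rely on set-level counts which bijumbledness supplies directly, together with the minimum degree hypothesis $\delta(G)\geq np/2$. A plausible route is an iterative ``greedy with resampling'' scheme: at each step, remove a large triangle-matching from a random induced sub-sample, argue that the leftover graph remains $(p,\beta')$-bijumbled with only a mild loss in $\beta'$, and iterate until only a set $W$ of size $o(|A|)$ remains.

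\textbf{Absorber construction via a template.} Since a single-vertex absorber is impossible for triangle-factors by divisibility, I would use \emph{triple absorbers}: for an ordered triple $(x,y,z)\in V(G)^3$, a six-vertex set $S=\{a,b,c,d,e,f\}$ such that $\{a,b,c\},\{d,e,f\}$ triangulate $G[S]$ and $\{x,a,b\},\{y,c,d\},\{z,e,f\}$ triangulate $G[S\cup\{x,y,z\}]$. The total number of such gadgets, summed over all triples, should be of order $n^9 p^{13}$, giving an average of roughly $n^6 p^{13}$ gadgets per triple; the count is obtained by repeatedly applying bijumbledness to set-level intersections of neighbourhoods rather than to individual vertices. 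These gadgets would then be assembled into the absorber $A$ via a Montgomery-style \emph{distributing template}: a bipartite graph with vertex classes ``triples in $V\setminus A$ to be absorbed'' and ``gadgets in $A$,'' designed so that deleting any small subset of either side still admits a perfect matching. A random embedding of this template into $G$ together with a union bound over candidate leftovers $W$ yields the required robustness of $A$.

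\textbf{Main obstacle.} The principal technical difficulty is the tightness of the $\log n$ factor: the bijumbledness parameter $\beta=\eps np^2/\log n$ allows only a constant number of nested applications before the relative error grows past any fixed threshold, because each application to a set of size $s$ contributes multiplicative error of order $n/(s\log n)$. This dictates that the absorber must have size $\Theta(n/\log n)$ (any smaller and the absorber itself drops below the pseudorandomness threshold), and every intermediate structure in the argument --- the gadget counts, the nibble iterates, and the template embedding --- must be engineered to live above this scale. Getting the three sizes (absorber, leftover, and the size of the sub-samples in the almost-factor phase) to fit together consistently, while carrying out the gadget counts using only the set-level information that bijumbledness actually provides in this regime, is the most delicate part of the argument.
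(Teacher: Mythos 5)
Your overall architecture (small absorbing structure of size $\Theta(n/\log n)$, almost-factor outside it, absorb the leftover) matches the paper in spirit, but the step on which everything hinges --- building the absorber from per-triple gadgets assembled by a Montgomery-style template --- has a genuine gap in exactly the regime of the theorem. Your gadget count is an \emph{average}: summing over all triples you get $\approx n^9p^{13}$ gadgets, hence $\approx n^6p^{13}$ per triple on average. But a template/perfect-matching argument needs a \emph{lower bound for each triple} (or at least for each vertex) that must be absorbable, and bijumbledness with $\beta=\eps np^2/\log n$ provides no such bound: as you yourself observe in the almost-factor discussion, the codegree of an individual pair is controlled only up to additive error $\beta\sqrt{np}\gg np^2$, so a specific pair may have no common neighbours at all, and the key step of your gadget (finding $c\in N(a)\cap N(b)\cap N(y)$ for \emph{specific} $a,b,y$) can fail outright; set-level applications of bijumbledness cannot rescue a fixed triple. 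This is not a presentational issue but the central obstacle, and the paper's solution is designed precisely to avoid it: the absorbers are $K_4^-$-chains whose ``removable'' vertex sets are large ($\Theta(n/\log n)$ per level in total), and all absorption decisions are made through Haxell's hypergraph-matching criterion (Theorem \ref{thm:hax}), whose vertex-cover hypothesis is verified using only set-level triangle existence (Lemma \ref{lemma:triangle}); no individual vertex, pair or triple is ever required to lie in a prescribed gadget. The price is the cascading structure of $\Theta(\log n)$ levels of chains (Lemma \ref{lemma:cascade}), each level absorbing the unused chains of the level below.

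A second, smaller problem is the almost-factor phase. Your plan asks for a leftover of size $n/(\log n)^{C}$, but with the available pseudorandomness the only easy bound is the one the paper uses: a maximal triangle matching leaves at most $2\beta/p^2=2\eps n/\log n$ vertices (Lemma \ref{lemma:triangle}); doing substantially better (even covering all but $n^{1-\eps}$ vertices under $\beta=o(np^2)$) is stated in the concluding remarks as an open problem, and your proposed nibble-with-resampling runs into the same per-pair codegree obstruction you identified. The paper instead takes the $\Theta(n/\log n)$ leftover as given, first covers the few vertices with small degree into the level-$0$ set greedily, and then absorbs the remaining leftover into the level-$0$ vertices by another application of Haxell's theorem, after which the cascade of chain levels finishes the factor. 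So while your high-level plan is reasonable, as written it would not close: you would need either a per-triple gadget count (unavailable here) or a mechanism, like the paper's chains-plus-Haxell cascade, that converts purely set-level triangle statements into a spanning structure.
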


The definition of bijumbled graphs does not put any restriction on the minimum degree and, indeed, it can contain isolated vertices. The minimum degree $np/2$ in Theorem \ref{thm:bijumbled} can be replaced by $\alpha np$ for any constant $\alpha > 0$ (influencing $\eps$). We could also prove Theorem \ref{thm:bijumbled} for jumbled graphs, however as the notion of bijumbledness is somewhat easier to deal with, we have decided not to overwhelm the reader with straightforward but technical details. 

We emphasise again that $\beta = \Omega(\sqrt{np})$ has the following two implications for Theorem \ref{thm:bijumbled}: (i) $p = \Omega(n^{-1/3})$ and (ii) by choosing $\eps > 0$ to be sufficiently small we can assume that $n$ is as large as we want. If either of these conditions is not met then there are no $(p, \eps np^2 / \log n)$-bijumbled graphs  and the theorem vacuously holds. Therefore for the rest of the paper we assume that $n$ is sufficiently large.

\section{Preliminaries} \label{sec:prelim}

Throughout the paper we use the term \emph{disjoint} as a shorthand for \emph{vertex-disjoint}.

Given a graph $G$, we denote with $\tau(G)$ the size of a smallest vertex cover of $G$. Note that if $G$ and $G'$ are graphs on the same vertex set, then $\tau(G \cup G') \le \tau(G) + \tau(G')$. The following generalisation of Hall's theorem by Haxell \cite{haxell1995condition} has turned out to be invaluable tool for embedding spanning graphs in (pseudo-)random graphs. We  repeatedly apply it in the proof of Theorem \ref{thm:bijumbled}.

\begin{theorem} \label{thm:hax}
	Let $G_1, \ldots, G_\ell$ be a family of graphs on the same vertex set. If for every $I \subseteq [\ell]$ we have $\tau(\bigcup_{i \in I} G_i) \ge 3 |I|$ then we can choose an edge $e_i$ from each graph $G_i$ such that all these edges are pairwise disjoint.
\end{theorem}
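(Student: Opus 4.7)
The plan is to argue by contradiction via a maximal partial system. Suppose no pairwise disjoint selection $(e_i)_{i=1}^\ell$ exists, and fix $e_i \in G_i$ for $i \in J$ with $|J|$ as large as possible; after relabelling, $J = \{1, \ldots, k\}$ with $k < \ell$. Set $U = V(e_1) \cup \cdots \cup V(e_k)$, so $|U| = 2k$. By maximality, every edge of $G_{k+1}$ must meet $U$. The goal is to extract from this failure a subset $I \subseteq [\ell]$ containing $k+1$ together with a vertex cover of $\bigcup_{i \in I} G_i$ of size strictly less than $3|I|$, directly contradicting the hypothesis.

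The first thing I would try is the one-step swap: if some edge $f = xy \in G_{k+1}$ with $x \in V(e_i)$ admits a replacement $e_i' \in G_i$ disjoint from $(U \setminus V(e_i)) \cup \{y\}$, then exchanging $e_i \leftarrow e_i'$ and taking $e_{k+1} \leftarrow f$ extends the partial system, a contradiction. Since no such direct swap need exist, I would iterate this idea by growing an \emph{alternating tree} $T$ of indices rooted at $k+1$: at each node $i \in V(T)$ I maintain a \emph{forbidden set} $F_i \subseteq U$ of vertices that any replacement for $e_i$ must avoid, and I attach a child $j \in J$ to $i$ whenever $G_i$ contains an edge disjoint from $F_i$ that meets $V(e_j)$, updating the forbidden set of $j$ accordingly. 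The process either terminates with a root-to-leaf branch along which a cascade of swaps augments the system (contradicting maximality of $k$), or it saturates with no new index to add.

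When the tree saturates, let $I \subseteq [\ell]$ be the set of indices that appear in $T$. By construction, every edge of every $G_i$ with $i \in I$ is incident to the set $C$ formed by the endpoints of the edges $e_j$ for $j \in I \setminus \{k+1\}$ together with a single ``witness'' vertex per internal node recording why no swap was available at that node. A careful count then gives $|C| \le 2(|I|-1) + (|I|-1) < 3|I|$, which contradicts $\tau(\bigcup_{i \in I} G_i) \ge 3|I|$. The main obstacle is precisely this bookkeeping: one has to maintain $F_i$ large enough to faithfully block augmentation but small enough that the cover has the claimed size, and one has to verify that an augmenting branch in $T$ really pulls back to a legal, pairwise disjoint swap sequence. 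The constant $3$ in the hypothesis is tight for exactly this reason — each non-root node in $T$ contributes two endpoint vertices of $e_j$ plus one witness vertex to the cover — which is why Haxell's condition replaces Hall's $|I|$ by $3|I|$.
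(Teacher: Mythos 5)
This statement is Haxell's theorem; the paper does not prove it but cites it from Haxell's 1995 paper, so your proposal has to be judged as a proof of that theorem from scratch. Your overall plan --- take a maximal partial system of disjoint representatives, grow an alternating tree of potential swaps rooted at the unrepresented index, and at saturation extract a vertex cover of $\bigcup_{i\in I}G_i$ of size $<3|I|$ --- is indeed the general shape of the known combinatorial proofs. But as written there is a genuine gap exactly at the step that constitutes the theorem: the claim that at saturation every edge of every $G_i$, $i\in I$, meets a set $C$ consisting of the endpoints of the $e_j$, $j\in I\setminus\{k+1\}$, plus ``a single witness vertex per internal node,'' with $|C|\le 2(|I|-1)+(|I|-1)$. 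You never define the forbidden sets $F_i$ precisely, and without that definition the witness-vertex claim is unsupported: a single vertex per node cannot in general block all edges of $G_i$ that avoid $F_i$, and the natural reason the tree stops growing is that such edges only meet $V(e_j)$ for indices $j$ \emph{already} in the tree --- which covers those edges by endpoint vertices, but says nothing about edges of $G_i$ ($i\le k$) that avoid $F_i$ and all the $V(e_j)$ entirely; handling those is precisely where the cascade-of-swaps augmentation and the accumulation of forbidden vertices interact, and where the count can blow up if the bookkeeping is done naively. You acknowledge this (``the main obstacle is precisely this bookkeeping'') but do not resolve it, so the proposal is a plan rather than a proof.

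Two further points. First, the compatibility of the swap cascade (that replacing $e_i$ along a root-to-leaf branch really yields pairwise disjoint edges, including disjointness from the pending edge of $G_{k+1}$) is exactly what the $F_i$ must encode, and since they are left unspecified this cannot be checked either. Second, your closing remark that the constant $3$ is tight ``because each non-root node contributes two endpoints plus one witness'' is not right as a statement about the theorem: Haxell's theorem holds with the weaker hypothesis $\tau(\bigcup_{i\in I}G_i)\ge 2|I|$, so the $3|I|$ version used in this paper is a convenient weakening, not the sharp threshold; the correct proof of the sharp version requires a more delicate choice of the partial system (or a potential-function/topological argument), which is further evidence that the counting step cannot be dispatched by the heuristic tally you give.
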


In the rest of the section we collect some properties of $(p, \beta)$-bijumbled graphs.

\begin{lemma} \label{lemma:edge}
	Let $G$ be a $(p, \beta)$-bijumbled graph. Then for any two  subsets $X, Y \subseteq V(G)$ such that $|X|, |Y| > \beta / p$ there exists an edge with one endpoint in $X$ and the other in $Y$.
\end{lemma}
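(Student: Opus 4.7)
The plan is to derive the conclusion directly from the definition of bijumbledness by lower-bounding $e(X,Y)$. Applying the defining inequality to the pair $(X,Y)$ gives
\[
	e(X,Y) \;\ge\; |X||Y|\,p \;-\; \beta\sqrt{|X||Y|},
\]
so it suffices to show the right-hand side is positive under the hypothesis $|X|,|Y| > \beta/p$. Factoring out $\sqrt{|X||Y|}$, the right-hand side equals $\sqrt{|X||Y|}\bigl(p\sqrt{|X||Y|}-\beta\bigr)$, which is positive exactly when $\sqrt{|X||Y|} > \beta/p$, i.e.\ $|X||Y| > \beta^2/p^2$.

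From $|X| > \beta/p$ and $|Y| > \beta/p$ we immediately get $|X||Y| > \beta^2/p^2$, so the bracket is strictly positive and therefore $e(X,Y) \ge 1$. Since $e(X,Y)$ counts edges with one endpoint in $X$ and one in $Y$ (edges in $X \cap Y$ counted twice, which only helps), having $e(X,Y) \ge 1$ means such an edge exists.

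I don't expect any real obstacle here; the entire argument is one line once the bijumbledness inequality is written down. The only minor point worth noting is the convention about edges in $X \cap Y$ being double-counted, which does not affect the existence statement. Strictness of the inequality $|X|,|Y| > \beta/p$ is exactly what is needed to turn the non-strict bijumbledness bound into a strict positive lower bound on $e(X,Y)$.
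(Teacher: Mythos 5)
Your proposal is correct and follows exactly the paper's argument: apply the bijumbledness lower bound to $e(X,Y)$, factor out $\sqrt{|X||Y|}$, and use $|X|,|Y| > \beta/p$ to conclude positivity. Nothing to add.
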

\begin{proof}
	The claim follows directly from the definition of $(p, \beta)$-bijumbledness and lower bounds on $|X|$ and $|Y|$:
	$$
		e(X, Y) \ge |X||Y|p - \beta \sqrt{|X||Y|} = \sqrt{|X||Y|} (\sqrt{|X||Y|} p - \beta) > 0. 
	$$	
\end{proof}

\begin{lemma} \label{lemma:small_degree}
	Let $G$ be a $(p, \beta)$-bijumbled graph. Then for any $\gamma > 0$ and any subset $W \subseteq V(G)$ there are at most 
	$$
		\frac{\beta^2}{\gamma^2 p^2 |W|}
	$$
	vertices with less than $(1 - \gamma)|W|p$ neighbours in $W$.
\end{lemma}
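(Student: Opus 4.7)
The plan is a direct application of the bijumbledness inequality to a bipartite-style edge count between the set of ``low-degree'' vertices and $W$ itself.

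Let $X \subseteq V(G)$ denote the set of vertices $v$ with $|N(v) \cap W| < (1-\gamma)|W|p$. The goal is to upper bound $|X|$. First I would count $e(X,W)$ in two different ways. On the one hand, summing degrees into $W$ over vertices of $X$ yields the upper bound
$$
    e(X, W) < |X| \cdot (1-\gamma)|W|p = |X||W|p - \gamma |X||W|p.
$$
Here it does not matter whether $X$ and $W$ intersect: an edge inside $X \cap W$ is counted twice in $e(X,W)$ under the paper's convention, which matches counting it once from each endpoint's contribution to its $W$-degree.

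On the other hand, the $(p,\beta)$-bijumbledness of $G$ applied to the pair $(X,W)$ gives the lower bound
$$
    e(X, W) \ge |X||W|p - \beta \sqrt{|X||W|}.
$$
Combining the two estimates and cancelling the term $|X||W|p$ yields $\gamma |X||W|p < \beta \sqrt{|X||W|}$. Squaring both sides (both sides are nonnegative) and dividing by $|X||W|$ gives $\gamma^2 |X||W|p^2 < \beta^2$, hence $|X| < \beta^2 / (\gamma^2 p^2 |W|)$, which is the claimed bound.

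There is essentially no obstacle here: the only mild subtlety is making sure the edge-counting convention is used consistently when $X$ and $W$ overlap, but as noted above this is automatic. The argument is a clean one-line consequence of bijumbledness applied to a carefully chosen pair of sets, with the parameter $\gamma$ controlling the deficiency that bijumbledness must absorb.
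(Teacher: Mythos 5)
Your proof is correct and is essentially identical to the paper's: both bound $e(X,W)$ from above by $(1-\gamma)|X||W|p$ using the degree deficiency and from below by $|X||W|p - \beta\sqrt{|X||W|}$ using bijumbledness, then solve for $|X|$. The remark about the double-counting convention when $X$ and $W$ overlap is a fine (if implicit in the paper) observation, but nothing further is needed.
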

\begin{proof}
	Consider a subset $W \subseteq V(G)$ and let $X \subseteq V(G)$ be the subset consisting of all vertices with less than $(1 - \gamma)|W|p$ neighbours in $W$. Then
	$$
		(1 - \gamma)|X||W|p > e(X, W) \ge |X||W|p - \beta \sqrt{|X||W|},
	$$
	implying the desired bound on $X$.	
\end{proof}

\begin{lemma} \label{lemma:triangle}
	Let $G$ be a $(p, \beta)$-bijumbled graph. Then for every three  subsets $X, Y, Z \subseteq V(G)$ (not necessarily disjoint) of size $|X| \ge 1 + 4\beta$ and $|Y|, |Z| \ge 2\beta / p^2$ there exists a triangle in $G$ with one vertex in each $X$, $Y$, and $Z$.
\end{lemma}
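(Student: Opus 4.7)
The plan is to first find a vertex $x \in X$ that is well-connected to both $Y$ and $Z$, and then use Lemma \ref{lemma:edge} on its neighbourhoods inside $Y$ and $Z$ to find the remaining edge of the triangle.

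First I would apply Lemma \ref{lemma:small_degree} with $\gamma = 1/2$ to the set $Y$: the number of vertices of $G$ having fewer than $|Y|p/2$ neighbours in $Y$ is at most $4\beta^2/(p^2 |Y|) \le 2\beta$, using the hypothesis $|Y| \ge 2\beta/p^2$. Call this set of ``bad'' vertices $B_Y$. Analogously, applying the lemma to $Z$ yields a set $B_Z$ of size at most $2\beta$ consisting of all vertices with fewer than $|Z|p/2$ neighbours in $Z$.

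Since $|X| \ge 1 + 4\beta \ge 1 + |B_Y| + |B_Z|$, there exists some $x \in X \setminus (B_Y \cup B_Z)$. Let $N_Y = N(x) \cap Y$ and $N_Z = N(x) \cap Z$. By the choice of $x$, we have $|N_Y| \ge |Y|p/2$ and $|N_Z| \ge |Z|p/2$, and using the lower bounds on $|Y|$ and $|Z|$ this gives $|N_Y|, |N_Z| \ge \beta/p$. To get the strict inequality $|N_Y|, |N_Z| > \beta/p$ that Lemma \ref{lemma:edge} requires, I would either redo the above with $\gamma$ slightly smaller than $1/2$ (the constants $2\beta/p^2$ and $4\beta$ in the statement leave ample slack for this), or observe that if equality held throughout then already a marginal strengthening of Lemma \ref{lemma:small_degree} would give extra room.

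With $|N_Y|, |N_Z| > \beta/p$ in hand, Lemma \ref{lemma:edge} produces an edge $yz$ with $y \in N_Y \subseteq Y$ and $z \in N_Z \subseteq Z$. Since $x$ is adjacent to both $y$ and $z$, the vertices $x, y, z$ form a triangle with one vertex in each of $X$, $Y$, $Z$, as desired. The only mildly delicate point is tracking the constants so that the output of Lemma \ref{lemma:small_degree} fits strictly inside the hypothesis of Lemma \ref{lemma:edge}; the core argument itself is a straightforward two-step application of the two preceding lemmas.
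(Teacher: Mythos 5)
Your proposal is correct and follows essentially the same route as the paper: apply Lemma \ref{lemma:small_degree} with $\gamma = 1/2$ to $Y$ and $Z$ to exclude at most $2\beta + 2\beta$ bad vertices, pick a good vertex in $X$ (possible since $|X| \ge 1 + 4\beta$), and finish with Lemma \ref{lemma:edge} on its neighbourhoods in $Y$ and $Z$. The strict-versus-weak inequality point you raise is a genuine but cosmetic wrinkle that the paper's own proof also glides over, and as you note the slack in the constants absorbs it.
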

\begin{proof}
	By Lemma \ref{lemma:small_degree} there are at most $\frac{\beta^2}{(1/2)^2 p^2 |Y|} \le 2\beta$ vertices with less than $|Y|p/2$ neighbour in $Y$, and similarly at most $2 \beta$ vertices with less than $|Z|p/2$ neighbours in $Z$. Therefore there exists a vertex in $X$ with at least $|Y|p/2 \ge \beta / p$ and $|Z|p/2 \ge \beta / p$ neighbours in $Y$ and $Z$, respectively. By Lemma \ref{lemma:edge} there exists an edge between these sets, thus giving a desired triangle.
\end{proof}

\section{The main building block: $K_4^-$ chains}

Let $K_4^-$ be a graph obtained from the complete graph on $4$ vertices by removing a single edge. We define an $\ell$-chain as a graph obtained by sequentially `gluing' $\ell$ copies of $K_4^-$ on vertices of degree $2$ (see Figure \ref{fig:chain}). If the length is not important we simply refer to it as a chain. Note that an $\ell$-chain contains exactly $\ell + 1$ vertices such that removal either of them (but exactly one!) results in a graph that has a triangle-factor (square vertices in Figure \ref{fig:chain}). We call these vertices \emph{removable}. For simplicity we define a $0$-chain to be a single vertex. If a graph $H$ is isomorphic to an $\ell$-chain then we denote with $R(H)$ the set of removable vertices in $H$. 

\begin{figure}[h!] \label{fig:chain}		
	\centering
	\begin{tikzpicture}[scale = 0.6]
		\tikzstyle{blob} = [fill=black,circle,inner sep=1.7pt,minimum size=0.5pt]
		\tikzstyle{sq} = [fill=black,rectangle,inner sep=3.5pt,minimum size=2.5pt]

		\node[sq] (f1) at (0, 0) {};
		\node[blob] (v1) at (1, -1) {}; \node[blob] (v2) at (1, 1) {}; 
		\node[sq] (f2) at (2, 0) {};
		\node[blob] (v3) at (3, -1) {}; \node[blob] (v4) at (3, 1) {}; 
		\node[sq] (f3) at (4, 0) {};
		\node[blob] (v5) at (5, -1) {}; \node[blob] (v6) at (5, 1) {}; 
		\node[sq] (f4) at (6, 0) {};
		\node[blob] (v7) at (7, -1) {}; \node[blob] (v8) at (7, 1) {}; 
		\node[sq] (f5) at (8, 0) {};

		\draw (f1) -- (v1) -- (f2) -- (v3) -- (f3) -- (v5) -- (f4) -- (v7) -- (f5) -- (v8) -- (f4) -- (v6) -- (f3) -- (v4) -- (f2) -- (v2) -- (f1);
		\draw (v1) -- (v2); \draw (v3) -- (v4); \draw (v5) -- (v6); 
		\draw (v7) -- (v8);

		\node at (6, -0.5) {$v$};

		\draw[pattern=north east lines] (f1.center) -- (v1.center) -- (v2.center) -- (f1.center) -- cycle;	

		\draw[pattern=north east lines] (f2.center) -- (v3.center) -- (v4.center) -- (f2.center) -- cycle;

		\draw[pattern=north east lines] (f3.center) -- (v5.center) -- (v6.center) -- (f3.center) -- cycle;

		\draw[pattern=north east lines] (f5.center) -- (v7.center) -- (v8.center) -- (f5.center) -- cycle;
	\end{tikzpicture}
	\caption{A $4$-chain with a triangle-factor after removing vertex $v$.}	
\end{figure}
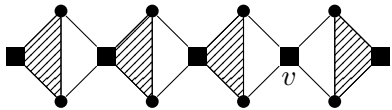

We say that a triangle in $G$ \emph{traverses} some sets $A, B, C \subseteq V(G)$ if it intersects all of them. We repeatedly use the following important observation.

\begin{observation} \label{obs:chain}
	Let $C_1, C_2$ and $C_3$ be disjoint chains in a graph $G$. If there exists a triangle in $G$ which traverses $R(C_1), R(C_2)$ and $R(C_3)$ then the subgraph of $G$ induced by $V(C_1) \cup V(C_2) \cup V(C_3)$ contains a triangle factor.
\end{observation}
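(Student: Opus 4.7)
The plan is to use the defining property of removable vertices directly: removing a single removable vertex from an $\ell$-chain leaves a graph that admits a triangle-factor on the remaining vertices. Given a triangle $T$ in $G$ traversing $R(C_1), R(C_2), R(C_3)$, I would use $T$ itself as one triangle of the target factor and then cover the rest of each $C_i$ by invoking this property with the vertex that $T$ picks out.

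Concretely, for each $i \in \{1,2,3\}$ let $v_i$ denote the unique vertex in $V(T) \cap R(C_i)$; since the chains are pairwise vertex-disjoint these three vertices are distinct, and $T$ is the triangle $v_1 v_2 v_3$. By the definition of a removable vertex, $C_i - v_i$ contains a triangle-factor $F_i$ on $V(C_i) \setminus \{v_i\}$; in the degenerate case where $C_i$ is a $0$-chain we have $V(C_i) = \{v_i\}$ and we simply take $F_i = \emptyset$. I would then form the family $\{T\} \cup F_1 \cup F_2 \cup F_3$ and verify it is a triangle-factor of the induced subgraph on $V(C_1) \cup V(C_2) \cup V(C_3)$: the triangles of $F_i$ tile $V(C_i) \setminus \{v_i\}$, the $V(C_i)$ are disjoint across $i$, and $T$ covers exactly $\{v_1, v_2, v_3\}$. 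So every vertex appears in exactly one triangle.

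All triangles used lie in $G$: the triangle $T$ is a triangle of $G$ by hypothesis, while each $F_i$ consists of triangles of $C_i$, which is a subgraph of $G$. There is no real obstacle: the observation is essentially a repackaging of the definition of removable vertex combined with the existence of the traversing triangle, so the argument is a short verification rather than a proof with a hard step.
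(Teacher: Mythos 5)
Your argument is correct and is exactly the intended one: the paper states this as an observation without proof precisely because it follows immediately from the definition of removable vertices, in the way you spell out (the traversing triangle covers one removable vertex per chain, and each $C_i$ minus that vertex has a triangle-factor by definition). Nothing further is needed.
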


The following lemma shows that if copies of $K_4^-$ are scattered throughout the graph then we can find a large $\ell$-chain.

\begin{lemma} \label{lemma:chains}
	Let $G$ be a graph with $n$ vertices such that for every two disjont subsets $X, Y \subseteq V(G)$ of size $|X|, |Y| \ge \alpha n$ there exists a copy of $K_4^-$ in $G$ with one vertex of degree $2$ in $X$ and the other vertices in $Y$. Then $G$ contains an $\ell$-chain for every $1 \le \ell \le (1 - 5 \alpha) n/3$.
\end{lemma}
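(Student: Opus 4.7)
The plan is to prove this by induction on $\ell$. The base case $\ell = 1$ is immediate: applying the hypothesis to any two disjoint subsets of $V(G)$ of size at least $\alpha n$ yields a single copy of $K_4^-$, which is a $1$-chain by definition.

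For the inductive step, suppose a $(k-1)$-chain $C_{k-1}$ has been built, where $2 \le k \le (1-5\alpha) n / 3$. I pick an endpoint $v$ of $C_{k-1}$ (a removable degree-$2$ vertex at the end) and set $W := V(G) \setminus V(C_{k-1})$; since $|V(C_{k-1})| = 3k - 2$, we have $|W| \ge n - 3k + 2 \ge 5\alpha n + 2$. The plan is to find a copy of $K_4^-$ on a vertex set of the form $\{v, u, c, d\}$ with $u, c, d \in W$ and with $v, u$ playing the role of the two degree-$2$ vertices. Attaching this $K_4^-$ to $C_{k-1}$ at $v$ then produces a $k$-chain whose new endpoint is $u$.

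The key step in producing such a $K_4^-$ is the case $|A| \ge 2\alpha n$, where $A := N(v) \cap W$. I partition $A$ into disjoint halves $A_1, A_2$, each of size at least $\alpha n$, and invoke the hypothesis on $(A_1, A_2)$. This yields a copy of $K_4^-$ with one degree-$2$ vertex $u \in A_1$ and the remaining three vertices $u', c, d \in A_2$, providing in particular the edges $uc$, $ud$, and $cd$ in $G$. Since $u, c, d \in A \subseteq N(v)$, the four vertices $\{v, u, c, d\}$ carry at least the five edges $vc, vd, uc, ud, cd$, which is a $K_4^-$ subgraph with $v, u$ as its degree-$2$ vertices (treating $vu$ as the missing edge of this copy, regardless of whether $vu$ happens to be an edge of $G$). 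The three new vertices $u, c, d$ lie in $W$, so the extended chain is automatically vertex-disjoint.

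The main obstacle is the complementary case $|A| < 2\alpha n$, where $v$ does not have enough outside neighbours to set up the splitting. I would try to rule this case out by strengthening the inductive statement to maintain the invariant that every constructed chain has an endpoint $v$ with $|N(v) \cap W| \ge 2\alpha n$. To propagate this invariant from $C_{k-1}$ to $C_k$, one has to restrict $A_1$ in the extension step to the subset $A_1^\star \subseteq A_1$ of vertices $u$ whose neighbourhood in the updated complement $W \setminus \{u, c, d\}$ is still at least $2\alpha n$; showing $|A_1^\star| \ge \alpha n$ — presumably by a second application of the hypothesis to bound the number of "bad" vertices with few outside neighbours — is the technical crux I would expect. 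An alternative would be a short rotation argument that replaces the final $K_4^-$ of $C_{k-1}$ by a different one in order to relocate the endpoint to a vertex with more outside neighbours.
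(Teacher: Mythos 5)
Your inductive step has a genuine gap, and it sits exactly where the real difficulty of the lemma lies. Your main case assumes the current endpoint $v$ satisfies $|N(v)\cap W|\ge 2\alpha n$, but the hypothesis of the lemma gives no minimum-degree information at all: it only asserts the existence of suitably placed copies of $K_4^-$ between \emph{linear-sized} sets. A graph can satisfy the hypothesis while every vertex has degree $o(n)$ --- and this is precisely the situation in the intended application, where the lemma is applied to bijumbled graphs with $np=\Theta(n^{2/3})$, so $|N(v)|\ll \alpha n$ for \emph{every} vertex. Hence your ``main case'' is typically vacuous and the ``obstacle case'' $|A|<2\alpha n$ is the generic one. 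Your proposed repairs do not close this: the invariant ``the endpoint has $\ge 2\alpha n$ neighbours outside the chain'' cannot be established even at the first step (no vertex need have linear degree), and bounding the number of ``bad'' endpoints by a second application of the hypothesis again requires degree information the hypothesis does not provide. The local construction of the new $K_4^-$ inside $N(v)$ (splitting $A$ into $A_1,A_2$ and using $v$ plus three vertices of the copy) is fine as far as it goes, but the whole approach of forcing the extension to succeed at the current endpoint cannot work under the stated assumptions.

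The paper proceeds differently, and the difference is exactly what handles the failure case you could not: it runs a depth-first-search--style exploration with backtracking. One tries to glue a $K_4^-$ onto the last added removable vertex using fresh vertices from an unexplored set $U$; if no such copy exists, that endpoint is discarded into a set $D$ (together with two further vertices into $D'$), and the construction continues from the previous removable vertex. The key point is that no vertex of $D$ is the degree-$2$ vertex of a $K_4^-$ with its other three vertices in $U$, so by the hypothesis $D$ and $U$ cannot both have size $\ge\alpha n$; a short counting argument ($|D'|\le 2|D|$, and the chain uses at most $3\ell+1$ vertices) then shows that before $U$ shrinks to size about $\alpha n$ the chain must already have reached length $\ell$. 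In other words, individual extension failures are allowed and merely accumulate in a set that the hypothesis forces to stay small, whereas your induction needs every extension from the current endpoint to succeed, which is not available.
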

\begin{remark}
	Graphs similar to $\ell$-chains have been used by Krivelevich \cite{krivelevich1997triangle} in what is probably the first application of the absorbing method in random graphs. Rather than gluing $K_4^-$'s such that they form a chain, Krivelevich uses them to form a tree of an unspecified shape with the property that at least a third of the vertices are removable. We could as well use such graphs instead of chains and some of the lemmas from \cite{krivelevich1997triangle} instead of Lemma \ref{lemma:chains}. However we have decided to spell out the proof and stick to $\ell$-chains for the sake of completeness and brevity.
\end{remark}
\begin{proof}[Proof of Lemma \ref{lemma:chains}]
	It suffices to show that we can find an $\ell$-chain for $\ell = \lfloor (1 - 5 \alpha) n/3 \rfloor$. Consider an exploration of the graph $G$ by building a chain $C$ using the following depth-first search procedure. Initially set $C = \emptyset$, $D, D' = \emptyset$ and $U = V(G)$ and as long as $U \neq \emptyset$ do:
	\begin{itemize}
		\item if $C = \emptyset$ then choose an arbitrary vertex $v \in U$, remove it from $U$ and set $C := \{v\}$;

		\item if $C \neq \emptyset$ and there exists a copy of $K_4^-$ which contains the last added removable vertex in $C$ as a degree-$2$ vertex and has all other vertices in $U$, glue such a copy onto $C$ and remove the vertices from $U$;

		\item otherwise remove the last added removable vertex from $C$ and move it to $D$; if $C$ was not a single vertex then also remove from $C$ the two last added non-removable vertices and add them to $D'$ (see Figure \ref{fig:explore}).
	\end{itemize}

	\begin{figure}[h!] \label{fig:explore}		
	\centering
	\begin{tikzpicture}[scale = 0.4]
		\tikzstyle{blob} = [fill=black,circle,inner sep=1.7pt,minimum size=0.5pt]
		\tikzstyle{dot} = [fill=black,circle,inner sep=0.5pt,minimum size=0.5pt]
		\tikzstyle{sq} = [fill=black,rectangle,inner sep=2.5pt,minimum size=2.5pt]

		\node[sq] (f1) at (0, 0) {};
		\node[blob] (v1) at (1, -1) {}; \node[blob] (v2) at (1, 1) {}; 
		\node[sq] (f2) at (2, 0) {};
		\node[blob] (v3) at (3, -1) {}; \node[blob] (v4) at (3, 1) {}; 
		\node[sq] (f3) at (4, 0) {};

		\node[sq] (f3p) at (7, 0) {};
		\node[blob] (v5) at (8, -1) {}; \node[blob] (v6) at (8, 1) {}; 
		\node[sq] (f4) at (9, 0) {};
		\node[blob] (v7) at (10, -1) {}; 
		\node[blob] (v8) at (10, 1) {}; 

		\node[sq] (f5) at (11, 0) {};

		\node at (5.5, -2) {$C$};

		\draw (f1) -- (v1) -- (f2) -- (v3) -- (f3) -- (v4) -- (f2) -- (v2) -- (f1);
		\draw (f3p) -- (v5) -- (f4) -- (v7) -- (f5) -- (v8) -- (f4) -- (v6) -- (f3p);

		\draw (f3) -- (4.8, 0.8); \draw (f3) -- (4.8, -0.8);
		\draw (f3p) -- (6.2, 0.8); \draw (f3p) -- (6.2, -0.8);

		\node[dot] at (5, 0) {};
		\node[dot] at (5.5, 0) {};
		\node[dot] at (6, 0) {};

		\node[blob] (u1) at (15, -1) {};
		\node[blob] (u2) at (14, -4) {};
		\node[sq] (fu) at (17, -3) {};
		\draw[dashed] (f5) -- (u1) -- (fu) -- (u2) -- (f5);
		\draw[dashed] (u1) -- (u2);
		
		\draw[->] (f5) to [bend left] (9, -5);
		\draw[->] (v7) to [bend left=15] (5, -5);
		\draw[->] (v8) to [bend left=25] (4, -5);

		\draw[rounded corners] (12, 2) -- (18, 2) -- (18, -9) -- (12, -9) -- cycle;
		\node at (12.8, -8.3) {$U$};

		\draw[rounded corners] (7, -4) -- (11, -4) -- (11, -9) -- (7, -9) -- cycle;
		\node at (7.8, -8.3) {$D$};

		\draw[rounded corners] (0, -4) -- (6, -4) -- (6, -9) -- (0, -9) -- cycle;
		\node at (0.8, -8.3) {$D'$};
	\end{tikzpicture}
	\caption{Exploration of a graph $G$.}	
	\end{figure}
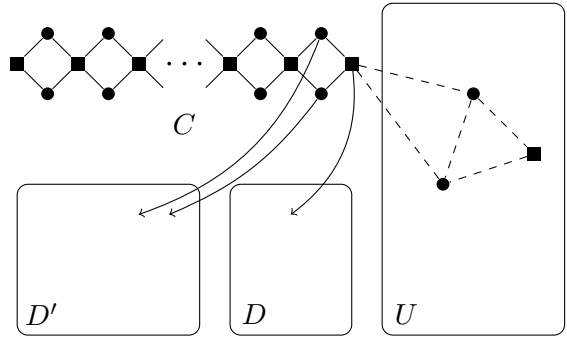

	Note that at every step of the exploration $D'$ is at most twice the size of $D$ and none of the sets changes by more than $3$ vertices. Moreover, by the construction there is no copy of $K_4^{-}$ with a vertex of degree 2 in $D$ and the other three vertices in $U$. If at some point $C$ is an $\ell$-chain then we are done. Otherwise consider the first step after which $|U| \le \alpha n + 3$ and note that then necessarily $|U| \ge \alpha n$. As $|C| < 3\ell + 1 \le (1 - 5 \alpha) n + 1$, we have
	$$
		|D| + |D'| = n - |U| - |C| \ge n - \alpha n - 3 - (1 - 5 \alpha )n - 1 = 4 \alpha n - 4 > 3 \alpha n.
	$$
	From $|D'| \le 2|D|$ we conclude $|D| \ge \alpha n$. However as $|U|, |D| \ge \alpha n$ we obtain a contradiction with the fact that there is no copy of $K_4^-$ with one vertex of degree $2$ in $D$ and the other vertices in $U$. Therefore $G$ contains an $\ell$-chain.
\end{proof}

It is straightforward to show that sufficiently bijumbled graphs satisfy the requirement of Lemma \ref{lemma:chains}, thus we have the following corollary.

\begin{lemma} \label{lemma:chain_jumbl}
	Let $G$ be an $(p, o(np^2))$-bijumbled graph. Then for every $W \subseteq V(G)$ of size $|W| \ge n/4$ the induced subgraph $G[W]$ contains an $\ell$-chain for every $1 \le \ell \le |W|/6$.
\end{lemma}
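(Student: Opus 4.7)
\medskip

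\noindent\textbf{Proof proposal.} The plan is to apply Lemma \ref{lemma:chains} inside the induced subgraph $G[W]$ with parameter $\alpha = 1/10$. Since $(1 - 5\alpha)|W|/3 = |W|/6$, this will produce an $\ell$-chain in $G[W]$ for every $1 \le \ell \le |W|/6$, matching the statement. The task therefore reduces to verifying the hypothesis of Lemma \ref{lemma:chains} in $G[W]$: for any two disjoint $X, Y \subseteq W$ with $|X|, |Y| \ge |W|/10 \ge n/40$, I must exhibit a copy of $K_4^-$ in $G[W]$ with one degree-$2$ vertex in $X$ and the other three vertices in $Y$.

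To produce such a $K_4^-$, write $\beta$ for the bijumbledness parameter, so $\beta = o(np^2)$, and proceed in two rounds. First, applying Lemma \ref{lemma:small_degree} with $\gamma = 1/2$ to each of $X$ and $Y$, the number of vertices with fewer than $|X|p/2$ neighbours in $X$ (respectively fewer than $|Y|p/2$ in $Y$) is at most $4\beta^2/(p^2 \cdot n/40) = o(np^2)$, which is much smaller than $|Y| \ge n/40$. I can therefore pick $u_1 \in Y$ with $|N_X| := |N(u_1) \cap X| \ge |X|p/2 \ge np/80$ and $|N_Y| := |N(u_1) \cap Y| \ge |Y|p/2 \ge np/80$. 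Second, applying Lemma \ref{lemma:small_degree} to $N_X$ and to $N_Y$ shows that at most $4\beta^2/(p^2 \cdot np/80) = o(np)$ vertices have fewer than $|N_X|p/2$ neighbours in $N_X$, and similarly for $N_Y$. Since $|N_Y| \ge np/80$ comfortably dominates both bad counts, I can choose $u_2 \in N_Y$ that has at least $|N_X|p/2 \ge np^2/160$ neighbours in $N_X$ and at least $|N_Y|p/2 \ge np^2/160$ neighbours in $N_Y$.

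Finally, I select any $v_1 \in N(u_2) \cap N_X \subseteq X$ and any $v_2 \in N(u_2) \cap N_Y \subseteq Y$; both are nonempty because $np^2 \to \infty$ (forced by $\beta = o(np^2)$ together with the Erd\H{o}s--Spencer bound $\beta = \Omega(\sqrt{np})$, which gives $np^3 \to \infty$). The vertices $u_1, u_2, v_1, v_2$ all lie in $W$ and are pairwise distinct (using $X \cap Y = \emptyset$ and the fact that no vertex is its own neighbour), and the five edges $u_1 u_2,\, u_1 v_1,\, u_1 v_2,\, u_2 v_1,\, u_2 v_2$ are all present in $G[W]$. Irrespective of whether $v_1 v_2$ is also an edge, these five edges span a copy of $K_4^-$ in which $v_1 \in X$ has degree $2$, verifying the hypothesis of Lemma \ref{lemma:chains}. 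The only delicate part of the argument is making sure that at each application of Lemma \ref{lemma:small_degree} the ``bad vertex'' count is genuinely negligible compared to the ambient set; the decisive saving is the extra factor of $p$ that appears in the denominator for the second application (because $|N_X|, |N_Y| \gtrsim np$ rather than $n$), which is exactly what the assumption $\beta = o(np^2)$ is designed to absorb.
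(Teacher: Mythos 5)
Your proposal is correct and follows essentially the same route as the paper: reduce to Lemma \ref{lemma:chains} applied inside $G[W]$ (the paper uses sets of size $|W|/20$ rather than your $|W|/10$, an immaterial difference), and build the $K_4^-$ by two successive applications of Lemma \ref{lemma:small_degree}, first finding a vertex with many neighbours in both $X$ and $Y$ and then a second vertex inside its neighbourhood with neighbours in both restricted neighbourhoods. Your write-up is just a more explicit version of the paper's argument, including the same key point that the second bad-vertex count $o(np)$ is dominated by $|N(u_1)\cap Y| = \Theta(np)$.
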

\begin{proof}
	We only need to check that for every disjoint subsets $X, Y \subset W$ of size $|W|/20$ there exists a copy of $K_4^-$ with one vertex in $X$ and the other three in $Y$. By Lemma \ref{lemma:small_degree} all but $o(n p^2)$ vertices in $G$ have at least  $|Y|p/2$ neighbours in $Y$ and at least $|X|p/2$ neighbours in $X$. Pick one such good vertex $y \in Y$. All we need to do now is find a vertex $y' \in N_G(y) \cap Y$ which has a neighbour in both $N_G(y) \cap X$ and $N_G(y) \cap Y$. Lemma \ref{lemma:small_degree} states that there are at most $o(np)$ vertices in $G$ which do not have such a property, thus as $|Y|p/2 = \Theta(np)$ the set $N_G(y) \cap Y$ contains a desired vertex.
\end{proof}

The following lemma captures the main `absorbing' property of chains. To digest its statement we recommend the reader to see how it is applied in the proof of Theorem \ref{thm:bijumbled}.

\begin{lemma} \label{lemma:cascade}
	Let $G$ be a $(p, \beta)$-bijumbled graph with $n$ vertices for some $\beta = o(np^2)$. Suppose we are given disjoint $\ell$-chains $C'_1, \ldots, C'_t \subseteq G$ for some $t, \ell \in \mathbb{N}$ such that $\ell$ is even, $t \ge 2000$ and
	$400 \beta / p^2 \le t  (\ell+1) \le n/24$. Then for any subset $W \subseteq V(G) \setminus \bigcup_{i \in [t]} V(C_i')$ of size $|W| \ge n/4$ there exist disjoint $(\ell/2)$-chains $C_1, \ldots, C_{2t} \subseteq G[W]$ with the following property: for every $L \subseteq [2t]$ there exists $L' \subseteq [t]$ such that the subgraph of $G$ induced by
	$$
		\bigcup_{i \in L} V(C_{i}) \cup \bigcup_{i \in L'} V(C'_i)
	$$
	contains a triangle-factor.
\end{lemma}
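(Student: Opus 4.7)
I build $C_1, \ldots, C_{2t}$ iteratively, applying Lemma \ref{lemma:chain_jumbl} to the free part of $W$ (initially $W$, minus vertices used by previously constructed chains). Since $t(\ell+1) \le n/24$, the total vertices used from $W$ stay $\le 5n/24$, leaving a free subset large enough throughout for bijumbledness-based chain-finding (with minor constant adjustments in the spirit of Lemma \ref{lemma:chain_jumbl}, justified by $\beta = o(np^2)$). During this step, for each $i \in [t]$ I also find a ``linking triangle'' $T_i = \{u_i, v_i, w_i\}$ via Lemma \ref{lemma:triangle} with $w_i \in R(C'_i)$ and $u_i, v_i$ in the free part of $W$, then grow the two chains $C_{2i-1}, C_{2i}$ seeded at $u_i, v_i$ so that $u_i \in R(C_{2i-1})$ and $v_i \in R(C_{2i})$.

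Given $L \subseteq [2t]$, set $a_i = |\{2i-1, 2i\} \cap L| \in \{0,1,2\}$. I place $i \in L'$ for each $i$ with $a_i = 2$; Observation \ref{obs:chain} applied to $T_i$ then gives a triangle-factor of $V(C_{2i-1}) \cup V(C_{2i}) \cup V(C'_i)$. For $a_i = 0$, $i \notin L'$ and nothing further is needed. The genuine difficulty is the case $a_i = 1$: as $\ell$ is even, $3\ell/2+1 \equiv 1$ and $9\ell/2+2 \equiv 2 \pmod 3$, so neither choice for $C'_i$ yields a divisible vertex count locally. I therefore select a set $S_* \subseteq [t] \setminus \{i : a_i = 2\}$ with $|S_*| \equiv -|\{i : a_i = 1\}| \pmod 3$ and add it to $L'$.

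The ``residual'' chains — one small $(\ell/2)$-chain per $a_i = 1$ index, plus one big $\ell$-chain per $i \in S_*$ — total a multiple of $3$, and I partition them into triples whose combined vertex-count is divisible by $3$. The four valid triple patterns (three smalls; two smalls $+$ one big; one small $+$ two bigs; three bigs) together cover every admissible distribution of smalls and bigs, so such a partition exists for any appropriate choice of $S_*$. For each triple, Observation \ref{obs:chain} reduces the task to finding one triangle in $G$ traversing the three removable sets of that triple.

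The main obstacle is that single-chain removable sets have size only $\ell/2 + 1$, which can fall below the $2\beta/p^2$ threshold required by Lemma \ref{lemma:triangle} when $t$ is large relative to $\ell$. I overcome this by forming ``batches'' of several residual chains whose combined removable sets exceed $2\beta/p^2$; the bound $t(\ell+1) \ge 400\beta/p^2$ guarantees batches of sufficient size throughout the process. A triangle found across three batches pinpoints one specific chain per batch, producing a triple of the partition, which can then be extracted. Iterating this greedy extraction, and invoking Theorem \ref{thm:hax} as a safety net for the last few residual chains, yields the full collection of disjoint traversing triangles and hence the required triangle-factor.
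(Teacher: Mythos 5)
There is a genuine gap, and it sits exactly at the point your proposal defers: how leftover $(\ell/2)$-chains get absorbed when only the aggregate bound $t(\ell+1)\ge 400\beta/p^2$ is available. First, your construction of the linking triangles $T_i$ already uses Lemma \ref{lemma:triangle} with the single set $R(C'_i)$, which has size $\ell+1$; nothing in the hypotheses makes $\ell+1$ as large as $2\beta/p^2$ or even $1+4\beta$ (e.g.\ $\ell=2$ with $t$ huge is allowed), so these per-index triangles need not exist. The same defect recurs, more seriously, in your triple/batch step: for the residual chains you must exhibit, for each specific triple, a triangle traversing its three removable sets, but a single small chain contributes only $\ell/2+1$ removable vertices. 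If, say, $L$ produces exactly three indices with $a_i=1$, the entire residual removable volume is $3(\ell/2+1)$, possibly far below $2\beta/p^2$, and no traversing triangle among those three particular chains is guaranteed by bijumbledness — this is not an endgame nuisance but the core difficulty of the lemma. Your ``safety net'' appeal to Theorem \ref{thm:hax} is not an argument: Haxell's theorem requires a concrete family of auxiliary graphs together with the verification $\tau(\bigcup_{i\in I}G_i)\ge 3|I|$, and you define neither; verifying that condition is precisely where all the work lies.

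For comparison, the paper avoids both problems by never asking a single small (or single big) chain to carry a triangle on its own. It constructs $3t$ candidate $(\ell/2)$-chains, defines for each one an auxiliary graph on $[t]$ whose edges $\{j,k\}$ record triangles traversing $R(C_i),R(C'_j),R(C'_k)$, and prunes to $2t$ chains for which the Haxell condition holds for all index sets of size at most $t/12$ — the verification uses Lemma \ref{lemma:triangle} only on unions of removable sets over $\Theta(t)$ big chains, which is where $t(\ell+1)\ge 400\beta/p^2$ enters. Then, for a given $L$, a maximal self-absorbable subfamily $L_1$ is removed first, forcing the residual $L_2$ to have size at most $t/12$, and Haxell matches each residual small chain with \emph{two} distinct big chains. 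Note that this pairing also dissolves your divisibility obstruction automatically: for even $\ell$, one $(\ell/2)$-chain plus two $\ell$-chains have $15\ell/2+3\equiv 0 \pmod 3$ vertices, so the $S_*$ parity correction you introduce (a complication created by tying each $C'_i$ to a fixed pair $C_{2i-1},C_{2i}$ and using it only once) never arises. To repair your argument you would essentially have to reintroduce this flexible many-to-many matching and prove the Haxell-type condition for it, i.e.\ reproduce the paper's proof.
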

\begin{proof}
	By repeated application of Lemma \ref{lemma:chain_jumbl} we obtain disjoint $(\ell/2)$-chains $C_1, \ldots, C_{3t} \subseteq G[W]$. For each $i \in [3t]$ form a graph $G_i$ on the vertex set $V' = [t]$ by adding an edge $\{j,k\}$ for $1 \le j < k \le t$ iff there exists a triangle which traverses $R(C_i)$, $R(C_j')$ and $R(C_k')$. We first show that there exists a subset $I \subseteq [3t]$ of size $|I| = 2t$ such that for every non-empty $J \subseteq I$ of size $|J| \le t / 12$ we have
	\begin{equation} \label{eq:tau_J}
		\tau(\bigcup_{i \in J} G_i) \ge 3|J|.
	\end{equation}

	Let $q = 0$ and as long as there exists a subset $J \subseteq [3t] \setminus \bigcup_{j = 1}^{q} J_j$ of size $|J| \le t/12$ which violates \eqref{eq:tau_J} set $J_{q+1} := J$ and increase $q$. Let $B = \bigcup_{j = 1}^q J_j$. We claim that $|B| < t/12$. Suppose towards a contradiction that this is not the case and consider the smallest $q' \le q$ such that $B' = \bigcup_{j = 1}^{q'} J_j$ is of size $|B'| \ge t/12$. As $|J_{q'}| \le t/12$ we have have $t/12 \le |B'| \le t/6$. By the choice of $J_1, \ldots, J_{q'}$ we have
	$$
		\tau(\bigcup_{i \in B'} G_i) \le \sum_{z = 1}^{q'} \tau(\bigcup_{i \in J_z} G_i) \le  \sum_{z = 1}^{q'} 3|J_z| = 3|B'| \le t/2.
	$$
	This implies that there exists a subset $I' \subseteq V'$ of size $|I'| \ge t/2$ which is an independent set in every $G_i$ for $i \in B'$ (recall that a complement of a vertex cover is an independent set). Split such $I'$ arbitrarily into two (nearly)-equal subsets $I'_1$ and $I'_2$ and consider sets $Y_1 = \bigcup_{j_1 \in I_1'} R(C'_j)$ and $Y_2 = \bigcup_{j_2 \in I_2'} R(C'_j)$. From assumptions on $t$ and $\ell$ we have 
	$$
		|Y_1|, |Y_2| \ge \lfloor t/4 \rfloor (\ell + 1) > 2 \beta / p^2.
	$$ 
	On the other hand the set $Y = \bigcup_{i \in B'} R(C_i)$ is of size $|Y| \ge \frac{t}{12} (\ell/2 + 1) \ge 5 \beta / p^2 \ge 1 + 4 \beta$. By Lemma \ref{lemma:triangle} there exists a triangle which traverses $Y$, $Y_1$ and $Y_2$, contradicting the assumption that $I'$ is an independent set in $\bigcup_{i \in B_{q'}} G_i$. To conclude, at the end of the procedure we have $|B| \le t/12$  thus we may take $I \subseteq [3t] \setminus B$ to be an arbitrary subset of size $2t$. 

	Relabel $\{C_i\}_{i \in I}$ as $C_1, \ldots, C_{2t}$. It remains to show that these chains have the desired property. Consider a subset $L \subseteq [2t]$. Let $L_1 \subseteq L$ be a maximal subset such that the subgraph induced by 
	$$
		\bigcup_{i \in L_1} V(C_i)
	$$
	contains a triangle-factor and set $L_2 = L \setminus L_1$. Note that $|L_2| \le t/12$: otherwise split $L_2$ into three (nearly-)equal subsets $J_1, J_2$ and $J_3$ and set $Y_k = \bigcup_{i \in J_k} R(C_i)$ for $k \in \{1,2,3\}$. Each $Y_k$ is of size 
	$$
		|Y_k| \ge \lfloor t/36 \rfloor (\ell/2 + 1) \ge 5 \beta / p^2 \ge \max\{1 + 4 \beta, 2 \beta / p^2\},
	$$
	thus again by Lemma \ref{lemma:triangle} there exists a triangle in $G$ which traverses $Y_1, Y_2$ and $Y_3$. Such a triangle then also traverses $R(C_{i_1}), R(C_{i_2})$ and $R(C_{i_3})$ for some  distinct $i_1, i_2, i_3 \in L_2$. Observation \ref{obs:chain} now gives a contradiction with the maximality of $L_1$. 

	From $|L_2| \le t/12$ we have that \eqref{eq:tau_J} holds for every $J \subseteq L_2$. By Theorem \ref{thm:hax} we can then choose an edge $e_i = \{v_i, w_i\} \in G_i$ for each $i \in L_2$ such that these edges are pairwise disjoint. This means that for every $i \in L_2$ there exists a triangle which traverses $R(C_i), R(C_{v_i}')$ and $R(C_{w_i}')$ (recall the construction of $G_i$). Therefore by Observation \ref{obs:chain} we obtain a triangle-factor of the subgraph induced by
	$$
		\bigcup_{i \in L_2} V(C_i) \cup V(C_{v_i}') \cup V(C_{w_i}'),
	$$
	which completes the proof of the lemma.
\end{proof}

\section{Proof of Theorem \ref{thm:bijumbled}}

Our proof strategy is inspired by ideas of Krivelevich \cite{krivelevich1997triangle}. However as we cannot use multiple exposure of random edges, a technique often employed in study of random graphs and heavily used in \cite{krivelevich1997triangle}, new ideas are required. We start by finding $2000$ disjoint $2^q$-chains using Lemma \ref{lemma:chain_jumbl}, where $q = \Theta(\log n)$ is chosen such that $2^q = \Theta(n / \log n)$. Then, iteratively, using Lemma \ref{lemma:cascade} construct additional $q$ levels of chains labelled from $q - 1$ down to $0$,  such that the $i$-th level consists of $2000 \cdot 2^{q - i}$ disjoint $2^i$-chains (recall that $0$-chains are just vertices) and the level $i+1$ has the `absorbing' property for level $i$ (see the property stated in Lemma \ref{lemma:cascade}). Let $U_i$ denote the set of vertices used in the $i$-th level. 

Suppose that there exists a collection of disjoint triangles which cover all the vertices in $V' = V(G) \setminus \bigcup_{i = 0}^q U_i$, some in $U_0$ and none in $U_i$ for $i \ge 1$. This corresponds to finding an almost triangle-factor in $G[V' \cup U_0]$ with a twist -- we need to cover all of $V'$. This turns out to be a straightforward (though non-trivial) task. Having such triangles, let $L_0 \subseteq U_0$ be the subset of vertices which are not used. It remains to find a triangle-factor in the subgraph induced by $L_0 \cup U_1 \cup \ldots \cup U_q$. This is done using a `cascading' effect of the levels: There exists a subset of chains in the $1$-st level which together with $L_0$ contain a triangle-factor. The remaining chains from the $1$-st level can be further covered using some chains from the $2$-nd level, and so on until we cover the remaining chains from the $(q-1)$-th level using the $q$-th level. This leaves us with $3k$ chains in the last level. However as these chains have $\Theta(n / \log n)$ removable vertices, we can simply partition them into groups of three and for each group find a triangle traversing the corresponding removable vertices. By Observation \ref{obs:chain} this finishes a triangle-factor. 

The main novelty compared to the proof by Krivelevich \cite{krivelevich1997triangle} lies in Lemma \ref{lemma:cascade}. Instead of showing that one can choose levels of chains such that each level has an absorbing property with the respect to the one below, as we have done here, Krivelevich uses multiple exposure of edges to absorb the remaining vertices into the current level.

\begin{proof}[Proof of Theorem \ref{thm:bijumbled}]	 
	Consider a random equipartition $V(G) = V_1 \cup V_2$. By Chernoff's inequality and union bound, with positive probability every vertex has at least $np/6$ neighbour in both $V_1$ and $V_2$ (we assumed here that $p \ge n^{-1/3} \gg \log n / n$ and $n$ is sufficiently large). Take one such partition.
	
	Let $q = \lceil \log_2 ( \frac{n}{ 16 \cdot 10^4 \log n} ) \rceil$ and, for convenience, for each $i \ge 0$ set $t_i = 2000 \cdot 2^{q-i}$. By repeatedly applying Lemma \ref{lemma:chain_jumbl} we obtain disjoint $2^q$-chains $C_1^q, \ldots, C_{t_q}^q \subseteq G[V_2]$ and let $U_q = \bigcup_{j = 1}^{t_q} V(C_j^q)$. We now define levels $i = q - 1, \ldots, 0$ iteratively as follows: let $C_1^i, \ldots, C_{t_i}^i \subseteq G[W_i]$ be disjoint $2^i$-chains given by Lemma \ref{lemma:cascade} for $\{C_j^{i+1}\}_{j \in [t_{i+1}]}$ and $W_i = V_2 \setminus \bigcup_{i' > i} U_{i'}$, and set $U_i = \bigcup_{j = 1}^{t_i} V(C_j^i)$. This is indeed possible: simple calculations show that each $U_i$ is smaller than $n/(10 \log n)$ thus $|W_i| \ge n / 4$, with room to spare, and $t_i 2^i \ge n / (80 \log n) \ge 400 \beta / p^2$ for sufficiently small $\eps > 0$. Other conditions follow by the choice of parameters.


	Let $V_C = U_0 \cup U_1 \cup \ldots \cup U_q$ denote the set of all vertices which are part of some chain. Consider a set $B \subseteq V(G) \setminus V_C$ of all vertices which have less than $|U_0|p/2$ neighbours in $U_0$. 
	By Lemma \ref{lemma:small_degree} and $|U_0| = t_0 \ge n / (80 \log n)$ we have $|B| < 320 np^2 / \log n = o(np^2)$. We start by greedily covering vertices in $B$ with vertex-disjoint triangles using $V_1$: for each vertex $v \in B$, sequentially, choose a triangle which contains $v$, has the other two vertices in $V_1$ and is disjoint from previously chosen triangles. We can indeed do that as no matter how such triangles are chosen by the end they occupy at most $3|B|$ vertices in $V_1$. Each vertex has $np/6$ neighbours in $V_1$, thus only a negligible portion of it is occupied. By Lemma \ref{lemma:edge} we can find an edge within remaining vertices which gives a desired triangle.

	Next, let $M \subseteq V(G) \setminus V_C$ be the set of all the unused vertices outside of chains, that is vertices which are not part of any triangle chosen in the previous step. Pick a maximal collection of vertex-disjoint triangles within $M$. By Lemma \ref{lemma:triangle} this leaves us with a subset $L \subseteq M$ of size at most $2\beta / p^2 = 2 \eps n / \log n$. We now cover $L$ with the help of $U_0$. Note that $L$ is significantly larger than the set $B$ thus we cannot apply the same greedy strategy -- if we are not careful we could exhaust all the neighbours of some vertex  before we cover it with a triangle. 

	We go around this using Theorem \ref{thm:hax}, similarly as in the proof of Lemma \ref{lemma:cascade}. For each $v \in L$ form a graph $G_v$ on the vertex set $U_0$ by joining two vertices iff they form a triangle with $v$. To cover $L$ using disjoint triangles with two endpoints in $U_0$ we need to chose an edge from each $G_v$ such that all these edges are pairwise disjoint. Theorem \ref{thm:hax} tells us that if for every $I \subseteq L$ we have
	\begin{equation} \label{eq:haxell}
		\tau(\bigcup_{v \in I} G_v) \ge 3|I|,
	\end{equation}
	then such a choice is possible. Equivalently, for every $I \subseteq L$ and for every $X \subseteq U_0$ of size $|X| < 3|I|$ it suffices to show that there is a triangle with one vertex in $I$ and the other two in $U_0 \setminus X$. 

	Suppose first $|I| \le t_0 p/12$. Then $|X| \le t_0 p/4$ and as every vertex in $L$ has at least $|U_0|p/2$ neighbours in $U_0$ (recall that we have already covered those that did no have this property in the first step) there exists an edge among those (at least) $t_0 p/4 \ge \beta / p$ ones outside of $X$ (by Lemma \ref{lemma:edge}). Otherwise, if $|I| \ge t_0 p/12 > 1 + 4 \beta$ then $|X| \le 3|I| \le 3|L| < t_0/2$ thus $|U_0 \setminus X| \ge t_0/2 \ge 2\beta / p^2$ and by Lemma \ref{lemma:triangle} there exists a triangle with one endpoint in $L$ and the other two in $U_0 \setminus X$. 

	To conclude, up to this point we have found disjoint triangles which cover all the vertices in $V(G) \setminus V_C$ and some in $U_0$. Let $L_0 \subseteq [t_0]$ be the set of indices of vertices in $U_0$ which were not used in the previous phase. Now for each $i = 0, \ldots, q-1$ let $L'_{i+1} \subseteq [t_{i+1}]$ be a subset of indices such that
	$$
		\bigcup_{j \in L_i} V(C_j^i) \cup \bigcup_{j \in L_{i+1}'} V(C_j^{i+1})
	$$
	contains a triangle-factor and set $L_{i+1} = [t_{i+1}] \setminus L_{i+1}'$. Such sets $L'_{i+1}$ are guaranteed to exists by the choice of chains and Lemma \ref{lemma:cascade}. All together, this gives us a triangle-factor of a subgraph induced by
	$$
		V_C \cup \bigcup_{i = 0}^{q-1} U_i \cup \bigcup_{j \in [t_q] \setminus L_q'} V(C_j^q).
	$$
	Finally, let $L_q = \{i_1, \ldots, i_m\} = [t_q] \setminus L_q'$ and note that $|L_q|$ is divisible by $3$. Therefore it suffices to show that for each $j \in \{0, \ldots, \tfrac{m}{3} - 1\}$ we have that the subgraph induced by $V(C_{3j}^q) \cup V(C_{3j+1}^q) \cup V(C_{3j + 2}^q)$ contains a triangle-factor. By Lemma \ref{lemma:triangle} there exists a triangle which traverses $R(C_{3j}^q)$, $R(C_{3j+1}^q)$ and $R(C_{3j+2}^q)$ (recall that $|R(C_j^q)| \ge \max\{1 + 4\beta, 2\beta /p^2\}$), which implies that a desired triangle-factor exists (see Observation \ref{obs:chain}). This concludes the proof.
\end{proof}

\section{Concluding remarks}

\begin{itemize}	
	\item It remains to determine if the $\log n$ factor in Theorem \ref{thm:main} is necessary. A conjecture of Krivelevich, Sudakov and Szabo \cite{krivelevich2004triangle} states that it can be omitted. As a first step it would be interesting to show that if $G$ is $(p, o(np^2))$-bijumbled then one can cover all but $n^{1 - \eps}$ vertices with disjoint triangles, for some $\eps > 0$. Generalising this to disjoint triangles which intersect different sets of removable vertices, one could potentially only need to construct $O(1)$ levels of chains---compared to $\Theta(\log n)$ levels---similarly as in a proof of Krivelevich \cite{krivelevich1997triangle}. This could lead to a solution of the conjecture.

	\item A straightforward modification of our argument shows that every $(p, \eps np^{r-1} / \log n)$-bijumbled graph with minimum degree $np/2$ contains a $K_r$-factor. This improves a result of Allen et al.\ \cite{allen2017powers} which requires $\beta = O(np^{3r/2})$, though it is fair to note that they show the existence of a much richer subgraph, namely the $r$-th power of a Hamilton cycle. Whether such a bound on $\beta$ is optimal is unclear as we do not know if it is the weakest (up to $\log n$ factor) condition which guarantees the existence even of a single $K_r$. 

	\item With a bit more effort we believe the same argument should also work for \emph{$(p, \beta)$-jumbled} graphs (see \cite{thomason1}), rather than the stronger bijumbled version. 


	\item It would be interesting to see if $\beta = o(np^2 / \log n)$ is also sufficient for the existence of any $2$-regular spanning subgraph, that is any collection of cycles with the total size $n$. A stronger result would be to improve a result of Allen et al.\ \cite{allen2017powers} and show that such $\beta$ ensures the square of a Hamilton cycle.

	\item The proof strategy can be seen as a `derandomisation' of a proof by Krivelevich \cite{krivelevich1997triangle}. One advantage of this approach compared to other absorbing-type proofs is that chains are very easy to construct and the effort goes into showing how to use them as absorbers. We hope this will make other problems on $H$-factors and, potentially, more general graphs, easier to tackle.
\end{itemize}

\paragraph{Acknowledgement.} The author would like to thank Anita Liebenau and Yanitsa Pehova for valuable discussions.

\bibliographystyle{abbrv}
\bibliography{references}

\end{document}